\theoremstyle{plain}
\newtheorem{theorem}{Theorem}[section]
\newtheorem{lemma}{Lemma}[section]
\newtheorem{prop}{Proposition}[section]
\newtheorem{Conj}{Conjecture}[section]
\theoremstyle{definition}
\newtheorem{defin}{Definition}[section]
\newcommand{\bggo}{\mathcal O}
\newcommand{\mf}[1]{\displaystyle{\mathfrak{#1}}}
\newcommand{\comment}[1]{}
\DeclareMathOperator{\spec}{\ensuremath{Spec}}
\DeclareMathOperator{\Gr}{\ensuremath{gr}}
\DeclareMathOperator{\ad}{\ensuremath{ad}}
\DeclareMathOperator{\Id}{\ensuremath{Id}}
\DeclareMathOperator{\Sym}{\ensuremath{Sym}}
\DeclareMathOperator{\Ann}{\ensuremath{Ann}}
\DeclareMathOperator{\Hom}{\ensuremath{Hom}}
\begin{document}

\title{Center of infinitesimal Cherednik algebras of $\mf{gl}_n$}
\author{ Akaki Tikaradze}
\address{The University of Toledo, Department of Mathematics, Toledo, Ohio, USA}
\email{\tt atikara@utnet.utoledo.edu}
\date{\today}
\maketitle

\begin{abstract} We show that the center of an infinitesimal Cherednik algebra of $\mf{gl}_n$
is isomorphic to the polynomial algebra of $n$ variables. As consequences of
this fact, we show that an analog of Duflo's theorem holds and all objects in the category $\bggo{}$ have finite length.
\end{abstract}
\section{Introduction}
This paper is concerned with infinitesimal Cherednik algebras. These algebras are subalgebras of continuous Cherednik algebras introduced
by Etingof, Gan and Ginzburg \cite{EGG}, as natural continuous analogs of the widely studied rational Cherednik algebras. Let us recall their definition.

\medbreak

Throughout, we will often abbreviate the Lie algebra $\mf{gl}_n$ as
$\mf{g}$, and denote by $\mf{Z}(A)$ the center of any algebra $A$.
We also fix an algebraically closed ground field $k$ (which will be assumed to have
characteristic 0 unless explicitly mentioned otherwise). Let $V$
denote the standard $n$-dimensional representation of $\mf{g}$
(the vector space of column vectors), and let $V^{*}$ denote its dual
representation.

\medbreak

The tensor algebra $T(V\oplus V^{*})$ is a representation of
$\mf{g};$ thus we may form the semi-direct product algebra
$\mf{U}\mf{g}\ltimes T(V\oplus V^{*})$ (where $\mf{U}\mf{g}$ denotes the
universal enveloping algebra of $\mf{g}$). Let $c:V\times V^{*}\to\mf{U}\mf{g}$ 
be a $\mf{g}$-invariant pairing; then we will
associate to it an algebra $H_c$ defined as the quotient of
$\mf{U}\mf{g}\ltimes T(V\oplus V^{*})$ by the following relations:
$$[v, w^{*}]=c(v, w^{*}), [v, v_1]=0=[w^{*}, w_1^{*}],$$
for all $v, v_1\in V, w^{*}, w_1^{*}\in V^{*}.$

\medbreak

It is clear that if $c=0,$ then $H_0$ (which we will denote by $H$)
is just the enveloping algebra of the semi-direct product Lie
algebra $\mf{g}\ltimes (V\oplus V^{*}).$ Let us introduce an algebra
filtration on $H_c$ by setting $\deg(v), \deg(w^*)=1$ for $v\in V,
w^{*}\in V^{*}$, and $\deg(\alpha)=0$ for $\alpha\in \mf{U}\mf{g}$. If
we pass to the associated graded algebra, we will get a surjective
homomorphism $\mf{U}\mf{g}\ltimes \Sym (V\oplus V^{*})\to \Gr H_c$
and if this map is an isomorphism, then one says that the PBW
property is satisfied and $H_c$ is an infinitesimal Cherednik
algebra (of $\mf{gl}_n$). The set of pairings 
$c:V\times V^{*}\to \mf{U}\mf{g}$ for which $H_c$ satisfies the PBW property is
described in \cite{EGG}, and it will be recalled later.

\medbreak

As the name suggests, the algebras $H_c$ are infinitesimal analogs
of rational Cherednik algebras \cite{EG}. It is an interesting problem to
develop representation theory of such algebras. The first natural
step would be to determine their center, which is the goal of this
paper. Namely, we prove that the center of an infinitesimal
Cherednik algebra of $\mf{gl}_n$ is isomorphic to the polynomial
algebra in $n$ variables. We also briefly discuss the application of
this to the analog of the BGG category $\bggo{}$ and primitive
ideals of $H_c.$ The main result of this paper is the following
(this is the generalization of our earlier result for $n=2$ \cite{T})
\begin{theorem}\label{Tqow} 
Let $H_c$ as above be an infinitesimal Cherednik algebra. Then its center
is isomorphic to the polynomial algebra in $n$ variables, and \\$\Gr \mf{Z}(H_c)=\mf{Z}(\Gr H_c).$
\end{theorem}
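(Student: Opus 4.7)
My plan is to split the theorem into two parallel assertions: (i) the Poisson center $\mf{Z}(\Gr H_c)$ is a polynomial algebra in $n$ variables, and (ii) every generator of this Poisson center lifts to a central element of $H_c$. Since the PBW property identifies $\Gr H_c$ with $\mf{U}\mf{g} \ltimes \Sym(V \oplus V^*) \cong \Sym(\g)$ for $\g := \mf{g} \ltimes (V \oplus V^*)$, the inclusion $\Gr \mf{Z}(H_c) \sub \mf{Z}(\Gr H_c)$ is automatic; then (i) and (ii) together force $\mf{Z}(H_c)$ to coincide with the polynomial subalgebra generated by these lifts, and give the equality $\Gr \mf{Z}(H_c) = \mf{Z}(\Gr H_c)$ on the nose.

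For (i), interpret $\mf{Z}(\Gr H_c) = \Sym(\g)^{\g}$ as the ring of coadjoint invariants of $\widetilde{G} = GL_n \ltimes (V \oplus V^*)$ on $\g^*$. A direct coadjoint-orbit computation shows that a generic orbit has codimension exactly $n$, so the invariant ring has Krull dimension $n$. Explicit generators can then be produced by forming the characteristic polynomial (or an appropriate truncation thereof) of a natural matrix built from $X \in \mf{g}^*$, $v \in V$ and $w^* \in V^*$, for instance an $(n+1) \times (n+1)$ block matrix; invariance under the translation subgroup is checked by hand, and algebraic independence follows from the generic-rank count.

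For (ii), the lifting is the crux. When $c = 0$ one has $H_c = \mf{U}(\g)$ and Duflo-type symmetrization lifts $\Sym(\g)^{\g}$ to $\mf{Z}(\mf{U}(\g))$ at once. For arbitrary $c$, two routes present themselves. The first is to build lifts order-by-order: start with the leading symbol coming from (i) and add strictly lower-order corrections to kill the commutator obstructions; because these obstructions live in lower filtration degree, the iteration terminates. The second, paralleling the author's earlier $n=2$ argument, is to reduce modulo a large prime $p$: in positive characteristic, $H_c$ acquires a large $p$-central subalgebra forcing it to be module-finite over its center, and generic-rank / transcendence-degree comparisons then pin down the characteristic-$0$ center via specialization. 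The main obstacle is precisely this lifting step: in the deformation route one must control lower-order corrections uniformly in the parameter $c$, while in the mod-$p$ route the subtle point is ensuring the specialization from characteristic $p$ back to characteristic $0$ preserves the requisite dimension data.
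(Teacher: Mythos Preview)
Your two-part outline matches the paper's strategy, but there is a genuine gap in part (ii). First a minor correction: with the filtration used here ($\deg \mf{U}\mf{g} = 0$, $\deg V = \deg V^* = 1$), $\Gr H_c$ is the enveloping algebra $\mf{U}(\mf{g} \ltimes (V \oplus V^*))$, not $\Sym(\g)$; it is noncommutative, though its center still corresponds to $k[\g^*]^{\g}$ via symmetrization, so your computation in (i) survives (the paper's explicit invariants are $f_k(x,\lambda,v)=\lambda B_k(x)v$, close to your block-matrix heuristic). The real problem is the lifting. Your Route~1 asserts that ``obstructions live in lower filtration degree, so the iteration terminates,'' but termination is not the issue---at each step one must show the obstruction is a \emph{coboundary}, and nothing in your outline explains why the relevant Hochschild $H^1$ is small enough to permit this. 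Route~2 (reduction mod $p$) is not what the paper does and would require substantial further argument to transport the conclusion back to characteristic zero.

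The key idea you are missing is a specific cohomological computation. The undeformed central generator $t_i = \sum_j [\beta_i, y_j] x_j$ defines a derivation $D = [t_i,\,-\,]$ of $H_c$ which, one checks directly, restricts to the subalgebra $\mf{U}(\mf{g} \ltimes V)$. The crux is then that the space of outer derivations of $\mf{U}(\mf{g} \ltimes V)$ is one-dimensional, generated by the derivation $\phi$ with $\phi(\Id) = 1$ and $\phi(\mf{sl}_n \ltimes V) = 0$. This is proved (in an argument due to Etingof) by identifying $H^1(\mf{g} \ltimes V,\, k[(\mf{g} \ltimes V)^*])$ with the de Rham cohomology of the affine group $GL_n \ltimes V$ via its dense coadjoint orbit, together with an injectivity check showing that localizing at the semi-invariant cutting out the orbit complement introduces no new $H^1$. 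From this one deduces that $D$ is inner, $D = \ad(c_i)$ with $c_i \in \mf{Z}(\mf{U}\mf{g})$, and $t_i - c_i$ is the desired central lift. This de Rham / dense-orbit step is the substance of the proof and is absent from your proposal.
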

The proof of this theorem consists of two parts. The first part is
the computation of the center in the undeformed case, i.e., the
computation of the center of $H_{c=0}=H=\mf{U}(\mf{g}\ltimes
(V\oplus V^{*})).$ The content of this theorem was communicated to us
by M. Rais and in fact it is a special case of a more general
theorem due to D. Panyushev \cite{P}. The second part is about lifting central elements of
$H=\Gr H_c$
 to the center of $H_c.$ This will be proved by establishing the non-existence of certain outer derivations.

\acknowledgement{I am very grateful to Pavel Etingof and especially to Mustapha Rais, who has 
provided me with many results and references, including the answer in the undeformed case. 
I would also like to thank  Apoorva Khare. }

\bigbreak

\section{Center of $\mf{U}(\mf{g}\ltimes (V\oplus V^{*}))$}

We will make use of an anti-involution $j:H_c\to H_c$ defined as
follows: $j(A)=A^t$ (transpose) for $A\in \mf{g},$ and similarly
$j(v)=v^t, v\in V.$ It follows easily from the explicit formula
for $c$ given below that $j$ is well-defined. There is also another
way of seeing this: It follows from [EGG], that if we fix a basis $y_{i}$ of $V$
and its dual basis $x_i\in V^{*}$, there is an element
$c_1\in \mf{Z}(\mf{U}\mf{g})$ such that $\sum_i y_ix_i-c_1$ belongs to the center
of $H_c,$ and we may redefine $H_c$ as a quotient of 
$\mf{U}\mf{g}\ltimes T(V\oplus V^{*})$ by the relations $[\sum_i y_ix_i-c_1, a]=0$
for all $a\in H_c.$ Now it is clear that this relation is invariant under the anti-involution
$j.$

\medbreak

To formulate the result about the center of $\mf{U}\mf{g}\ltimes (V\oplus V^{*})$,
 we will have to introduce some
terminology. We will write elements of $V$ as column vectors, and
elements of $V^{*}$ as row vectors. By $y_i,$  $i=1, ..., n,$ we will
denote a column vector with 1 in the $i$-th place and 0 everywhere
else. Similarly, $x_i$ will denote the transpose (also dual) of
$y_i.$ Let $Q_1, ..., Q_n\in k[\mf{g}]$ be defined as follows:
$$ \det(t\Id-X)=\sum_{j=0}^n (-1)^j t^{n-j}Q_j(X).$$
(Note that $Q_0=1$.) For $0\leq k< n,$ let $B_{k}:\mf{g}\to \mf{g}$
be the polynomial function ($B_k$ is the gradient of $Q_{k+1}$)
defined by
$$B_{k}(X)=X^k-Q_1X^{k-1}+...+(-1)^kQ_k.$$ Now let $L$ denote the
Lie algebra $\mf{g}\ltimes (V\oplus V^{*}),$ and let $S$ denote its
Lie group. Then we may identify $L^*$ with $\mf{g}\ltimes (V^*\oplus
V)$ via the trace pairing. We will denote by $\beta_i\in
\mf{Z}(\mf{U}\mf{g})$ the image of $Q_i$ under the symmetrization
map from $k[\mf{g}]=k[\mf{g}^{*}]$ to $\mf{U}\mf{g}$ (thus $\beta_1,
..., \beta_n$ are the standard generators of
$\mf{Z}(\mf{U}(\mf{g})$)). We will introduce elements $t_i=\sum_j
[\beta_i, y_j]x_j$. Now consider the functions 
$f_k:L^{*}\to k$ ($0\leq k\leq n-1$) defined as follows: $$f_k(x,
\lambda, v)=\langle \lambda, B_{k}(x)v\rangle=\lambda B_k(x)v,$$ where $x\in
\mf{g}, \lambda\in V^{*}, v\in V.$ Now we have the following theorem
whose proof is included for completeness' sake
(the proof is also given in a recent preprint by Rais \cite{R2}, and
a more general result is proved by Panyushev [P]).

\begin{theorem} [Panyushev, Rais] The center of $\mf{U}L$ is $k[t_1,..., t_n],$ which is
also isomorphic, via the symmetrization,
map to $k[L^{*}]^L=k[L^{*}]^S=k[f_0, ..., f_{n-1}].$
\end{theorem}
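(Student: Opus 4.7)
The proof has two halves: computing the invariant algebra $k[L^{*}]^L$, and transporting the answer through the symmetrization map $\operatorname{sym}:\Sym L\to\mf{U}L$. The latter is $L$-equivariant, so it restricts to a vector space isomorphism $\Sym(L)^L=k[L^{*}]^L\cong \mf{Z}(\mf{U}L)$; the whole game is to match both sides up with the explicit generators. I would package the $f_k$ into the generating series
\[
h(x,\lambda,v;t) := \det(tI-x)\,\lambda(tI-x)^{-1}v = \sum_{k=0}^{n-1} t^{n-1-k} f_k(x,\lambda,v),
\]
using the adjugate expansion $\operatorname{adj}(tI-x)=\sum_{k=0}^{n-1}t^{n-1-k}B_k(x)$. $GL_n$-invariance of $h$ is obvious; for the translation $x\mapsto x+s\,u\lambda$ coming from $u\in V$, the matrix determinant lemma gives $\det(tI-x-su\lambda)=\det(tI-x)\bigl(1-s\lambda(tI-x)^{-1}u\bigr)$ while Sherman--Morrison gives $\lambda(tI-x-su\lambda)^{-1}v=\lambda(tI-x)^{-1}v/\bigl(1-s\lambda(tI-x)^{-1}u\bigr)$, and the two factors cancel exactly; the $V^{*}$-action is symmetric. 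So each $f_k$ is an invariant. Algebraic independence of $f_0,\dots,f_{n-1}$ follows from specialization: at $x=\operatorname{diag}(a_1,\dots,a_n)$ with distinct $a_i$, one has $h = \sum_i \lambda_iv_i\prod_{j\neq i}(t-a_j)$, a generic polynomial of degree $n-1$ in $t$ whose coefficients are algebraically independent over $k$.

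To see that $f_0,\dots,f_{n-1}$ \emph{exhaust} $k[L^{*}]^L$, the transcendence degree of the invariant field has to be exactly $n$, i.e.\ a generic coadjoint orbit in $L^{*}$ has codimension $n$. Using the explicit formulas
\[
\ad^{*}_A(x,\lambda,v)=([A,x],-\lambda A,Av),\quad \ad^{*}_u(x,\lambda,v)=(u\lambda,0,0),\quad \ad^{*}_\mu(x,\lambda,v)=(-v\mu,0,0),
\]
one runs a rank count: at a generic point the stabilizer of $(\lambda,v)$ inside $\mf{g}$ has dimension $(n-1)^2$, and combining this with the rank-one contributions of the nilradical $V\oplus V^{*}$ to the $\mf{g}$-coordinate, the image of $\ad^{*}$ has codimension exactly $n$ in $L^{*}$. (An alternative is to quote Panyushev's general theorem for semi-direct products with an abelian ideal.) Combined with algebraic independence, this forces $k[L^{*}]^L=k[f_0,\dots,f_{n-1}]$.

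For the transport to the center, a direct symbol computation in $\Gr\mf{U}L=\Sym L$ shows that the leading symbol of $t_i=\sum_j[\beta_i,y_j]x_j$ is $\sum_j\{Q_i,y_j\}x_j$; using the Poisson formula $\{Q_i,y_j\}=(\lambda B_{i-1}(x))_j$ (a consequence of $B_{i-1}=\nabla Q_i$), this equals $f_{i-1}$. Centrality of each $t_i$ is checked directly by computing $[y_k,t_i]$, $[x_k,t_i]$, and $[A,t_i]$ for $A\in\mf{g}$, using Jacobi and $\beta_i\in\mf{Z}(\mf{U}\mf{g})$. Thus the $t_i$ lift the symbols $f_{i-1}$ to central elements, and since those symbols generate $\Sym(L)^L$, the subalgebra $k[t_1,\dots,t_n]$ exhausts $\mf{Z}(\mf{U}L)$ by an induction on filtered degree.

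The real work is the generation step in the invariant theory. Invariance and algebraic independence are essentially formal once the generating series $h$ is in hand, but completeness requires the dimension count for the generic coadjoint orbit. Because the nilradical $V\oplus V^{*}$ perturbs only the $\mf{g}$-coordinate, and only by rank-one matrices, one has to be careful that these contributions are not already accounted for by the $GL_n$-action; without appealing to Panyushev's theorem, this rank computation is the delicate part.
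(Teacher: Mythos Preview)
Your invariance argument via the generating series $h$ and the Sherman--Morrison/matrix-determinant cancellation is a pleasant alternative to the paper's route, which instead proves centrality of the $t_i$ directly in $\mf{U}L$ by reducing (via the anti-involution $j$) to the determinantal identity $\sum_{t,p}\frac{\partial^2 Q_k}{\partial e_{ip}\partial e_{jt}}\,y_ty_p=0$. Either approach is fine for that half.

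The genuine gap is in your completeness step, and it is not where you think it is. You flag the rank computation for the generic coadjoint orbit as ``the delicate part,'' but even granting that the generic orbit has codimension exactly $n$, this together with algebraic independence of $f_0,\dots,f_{n-1}$ only tells you that $k(L^{*})^L$ is \emph{algebraic} over $k(f_0,\dots,f_{n-1})$. It does not force $k[L^{*}]^L=k[f_0,\dots,f_{n-1}]$: replace each $f_i$ by $f_i^2$ and nothing in your argument changes, yet the conclusion becomes false. To upgrade a transcendence-degree match to generation you would need something like Igusa's criterion --- that the generic fibres of $(f_0,\dots,f_{n-1}):L^{*}\to\mathbb{A}^n$ are single $S$-orbits --- and you have not supplied it. Quoting Panyushev is of course legitimate, but then your orbit-dimension computation is doing no work.

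The paper sidesteps this entirely. It quotes Rais's result that the $S$-saturation of the affine slice $M=\{(x_n^{*},Y,v):v\in V\}$ (with $Y$ the subdiagonal nilpotent) is dense in $L^{*}$. Density makes the restriction $k[L^{*}]^S\hookrightarrow k[M]$ injective; since $B_k(Y)=Y^k$ one checks that $f_k|_M(v)=v_{n-k}$, so $f_0,\dots,f_{n-1}$ restrict to coordinate functions on $M\cong\mathbb{A}^n$ and the subalgebra $k[f_0,\dots,f_{n-1}]$ already surjects onto $k[M]$. Injectivity then forces $k[L^{*}]^S=k[f_0,\dots,f_{n-1}]$ with no orbit-dimension bookkeeping at all. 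If you want to salvage your own line, you must either prove the single-orbit fibre statement or import this slice argument; the codimension count alone does not close the proof.
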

\begin{proof}

At first, let us prove that every element in $k[L^{*}]^S$ lies in
$k[f_0, ..., f_{n-1}].$ The key observation is that under the
coadjoint action of $S$ on $L^{*},$ the orbit of $(x_n^{*}, Y, V) $
(we will denote this affine space by $M$)
 is dense \cite{R1},
 where $Y$ is the matrix with 1s on the subdiagonal and 0s everywhere else, and $x_n^{*}=(0, 0, ..., 1).$
  Thus, it would suffice to prove that $k[f_0, ..., f_n]\bigl\lvert_M=k[M]$, but this is immediate.

\medbreak

  Now we establish the other direction. It will be more convenient to prove it in $\mf{U}L$ itself. It is easy to see that
  the image of $f_k$ under the symmetrization is $\sum_{i=1}^n[\beta_k, y_i]x_i.$
  Now we claim that for any $\alpha\in \mf{Z}(\mf{U}\mf{g}),$ the anti-involution $j$ fixes
  the following element: $b=\sum_{i=1}^n [\alpha, y_i]x_i$ (this is true in $H_c$ for any $c$). Indeed,
we have $b=\sum y_i\alpha x_i-\alpha\sum y_ix_i$, and $j$
clearly fixes the first summand. Since $\sum y_ix_i$ commutes with
$\mf{g},$ the second summand is fixed too. So it suffices to check that
the above element commutes with $\mf{g}$ and $V$ (by
using the anti-involution). The first part follows from the following easy
lemma.
\begin{lemma} For any $\alpha\in \mf{Z}(\mf{U}\mf{g})$, the element $\sum_i [{\alpha}, y_i]x_i$
commutes with $\mf{g}$.
\end{lemma}
 So now we need to show that $[[\beta_k, y_i],v]=0$ for all $i$ and $k,$ and $v\in V$. We will check the latter equality
 in $\Sym \mf{g}\otimes \Sym V$ via the symmetrization map. It is easy to see that $[\alpha, y_i]$ maps to
 $\sum_j \frac{\partial \bar \alpha}{\partial e_{ij}}y_j$, where $\bar \alpha$ is the symmetrization of $\alpha.$
  Thus our desired equality turns into
 $$\sum_{t, p} \frac{\partial^2  Q_k}{\partial e_{ip}\partial e_{jt}} y_ty_p=0$$
for all $i, j, k.$ But this is just a consequence of properties of determinants. 
This concludes the proof of Rais's theorem.
\end{proof}
\section{Center of $H_c$}
To finish the proof of Theorem \ref{Tqow}, we need to prove that for any
$1\leq i\leq n$ there exists $c_i\in \mf{Z}(\mf{U}\mf{g})$ such that
 $\eta_i=t_i-c_i$
is in the center of $H_c.$ Notice that since such an element will necessarily commute with $\mf{g}$
and is fixed by the anti-involution $j,$ it will be sufficient to prove that this element commutes with $V.$
Notice that the endomorphism of $H_c$ defined as $\alpha\mapsto D(\alpha)=[t_i, \alpha]$
preserves $\mf{U}\mf{g}\ltimes \Sym V=\mf{U}(\mf{g}\ltimes V).$
Indeed, $$D(v)=\sum_j[\beta_i, y_j][x_j, v]+\sum_j [[\beta_
i, y_j], v]x_j,$$ but $\sum[[\beta_i, y_j], v]=0$ and $[x_j, v]\in\mf{U}\mf{g}$ so $D(v)\in \mf{U}(\mf{g}\ltimes V)$
and since
$t_i$ commutes with $\mf{g}$ we obtain that $D(\mf{U}(\mf{g}\ltimes V))\subset \mf{U}(\mf{g}\ltimes V).$
In particular, using the PBW property of $H_c,$ we see that $D$ is a $\mf{g}$-invariant derivation
of $\mf{U}(\mf{g}\ltimes V).$ Thus by Proposition \ref{TWeiwei}, it must be an inner derivation,
so there exists $c_i\in H_c$ such that
$D=\ad(c_i).$ Therefore, $t_i-c_i$ belongs to the center of $H_c.$ Since the filtration degree of 
$c_i$ is less than the filtration degree of $t_i$ (which is equal to 2), 
$c_i\in \mf{U}\mf{g}\oplus \mf{U}(V\oplus V^{*}).$ Also, since $t_i$ commutes with $\mf{U}\mf{g}$
and $\mf{U}\mf{g}V$ and $\mf{U}\mf{g}V^{*}$ have weights 1, -1 with respect to the adjoint action by $\Id\in\mf{g}$, 
we conclude that $c_i\in \mf{Z}(\mf{U}\mf{g})$.
\medbreak

Let $\phi:\mf{U}(\mf{g}\ltimes V)\to \mf{U}(\mf{g}\ltimes V)$ be a
derivation defined as follows: $\phi(\mf{sl}_n\ltimes \Sym V)=0$ and
$\phi(\Id)=1,$ where by $\Id$ we denote the identity matrix.
 Now we have the following key proposition.
\begin{prop}\label{TWeiwei} 
$\phi$ is a generating outer derivation of $\mf{U}(\mf{g}\ltimes V)$ over $k$.
\end{prop}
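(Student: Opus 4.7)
The plan is to verify $\phi$ is a derivation, show it is outer, and then prove every derivation of $\mf{U}(\mf{g}\ltimes V)$ equals $\lambda\phi+\ad(c)$ for some $\lambda\in k$ and $c\in\mf{U}(\mf{g}\ltimes V)$. Well-definedness is a routine check of the Leibniz rule on brackets of generators of $\mf{g}\ltimes V$; the only non-trivial instance is $\phi([\Id,v])=\phi(v)=0$ against $[\phi(\Id),v]+[\Id,\phi(v)]=[1,v]+0=0$. For outerness, grade $\mf{U}(\mf{g}\ltimes V)$ by the eigenvalues of $\ad(\Id)$, so that $\mf{g}$ has weight $0$ and $V$ has weight $1$: if $\phi=\ad(c)$ with $c=\sum_k c_k$ its weight decomposition, then $1=\phi(\Id)=[c,\Id]=-\sum_k k c_k$, and matching the weight-$0$ parts yields the contradiction $0=1$.

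For an arbitrary derivation $D$, decompose $D=\sum_{m\geq -1}D_m$ by $\Id$-weight shift. The piece $D_{-1}$ kills $\mf{g}$ for weight reasons, and the Leibniz identity applied to $[\Id,v]=v$ gives $D_{-1}(v)=[\Id,D_{-1}(v)]=0$, since $D_{-1}(v)\in\mf{U}\mf{g}$ has $\Id$-weight $0$. For $m\neq 0,-1$, the analogous relations $D_m([\Id,a])=0$ and $D_m([\Id,v])=D_m(v)$ force $D_m(a)=-\frac{1}{m}[D_m(\Id),a]$ and $D_m(v)=-\frac{1}{m}[D_m(\Id),v]$, so that $D_m=\ad(c_m)$ for $c_m=-D_m(\Id)/m$.

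The critical case is $m=0$. The $\Id$-cocycle conditions force $D_0(\Id)\in\mf{Z}(\mf{U}\mf{g})$ (commuting with $\mf{g}$) and $D_0(\Id)\in\mf{U}(\mf{g}\ltimes V)^V=\Sym(V)$ (commuting with $V$), whose intersection inside $\mf{U}(\mf{g}\ltimes V)$ is $k$ by PBW. Thus $D_0(\Id)=\lambda\in k$, and after subtracting $\lambda\phi$ we may assume $D_0(\Id)=0$. Whitehead's lemma applied to the locally finite $\mf{sl}_n$-module $\mf{U}\mf{g}$ gives $D_0|_{\mf{sl}_n}=\ad(c)|_{\mf{sl}_n}$ for some $c\in\mf{U}\mf{g}$; since $[c,\Id]=0$ automatically, $D_0|_\mf{g}=\ad(c)|_\mf{g}$. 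The residual $D_0''=D_0-\ad(c)$ kills $\mf{g}$ and is determined by $\{f_i=D_0''(y_i)\}\subset\mf{U}\mf{g}\cdot V$, which is $\mf{g}$-equivariant and satisfies the symmetric compatibility $[f_i,y_j]=[f_j,y_i]$.

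The main obstacle is proving every such system $\{f_i\}$ arises as $\{[c',y_i]\}$ for some $c'\in\mf{Z}(\mf{U}\mf{g})$, which would make $D_0''$ inner. Under the identification $\Hom_\mf{g}(V,\mf{U}\mf{g}\otimes V)\cong(\mf{U}\mf{g}\otimes\mf{g})^\mf{g}$ (via $V\otimes V^{*}\cong\mf{g}$), the invariants on the right are freely generated over $\mf{Z}(\mf{U}\mf{g})$ by the gradients $B_0,\ldots,B_{n-1}$ from Section~2; Jacobi shows the image of $c'\mapsto([c',y_i])_i$ lies in the symmetric subspace, and the converse surjectivity claim---a non-commutative Poincar\'e lemma---can be verified by passage to the associated graded, where it reduces to the classical exactness of $\mf{g}$-invariant closed polynomial $1$-forms on $\mf{g}^{*}$.
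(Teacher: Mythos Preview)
Your route is genuinely different from the paper's. The paper (following Etingof) argues geometrically: identifying $H^1(\mf{U}\mf{g}_1,\mf{U}\mf{g}_1)$ with $H^1(\mf{g}_1,k[\mf{g}_1^{*}])$, it inverts the semi-invariant $P$ cutting out the complement of the dense coadjoint orbit $X\cong G_1$, computes $H^1(\mf{g}_1,O(G_1))\cong H^1_{DR}(G_1)$ (one-dimensional, spanned by $\phi$), and shows the localization map on $H^1$ is injective by checking that no nonzero function on the boundary hypersurface $Z=\{P=0\}$ transforms by a nontrivial power of $\det$. Your weight decomposition by $\ad(\Id)$, together with Whitehead's lemma for $\mf{sl}_n$, is a clean algebraic reduction that bypasses all of this geometry; the price is that the entire content of the proposition gets pushed into your last paragraph.

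That last step is where there is a real gap. You assert that the symmetric compatibility $[f_i,y_j]=[f_j,y_i]$, after passing to the associated graded, ``reduces to the classical exactness of $\mf{g}$-invariant closed polynomial $1$-forms on $\mf{g}^{*}$.'' But writing $f_i=\sum_j F_{ij}y_j$, the compatibility reads
\[
\frac{\partial F_{ij}}{\partial e_{ak}}+\frac{\partial F_{ia}}{\partial e_{jk}}
=\frac{\partial F_{kj}}{\partial e_{ai}}+\frac{\partial F_{ka}}{\partial e_{ji}},
\]
which is \emph{not} the closedness condition $\partial F_{ij}/\partial e_{kl}=\partial F_{kl}/\partial e_{ij}$ for the form $\sum F_{ij}\,de_{ij}$. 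The two conditions do coincide for $G$-equivariant $F$, but this is exactly what needs to be proved and it uses the equivariance in an essential way. One way to close the gap: use the Chevalley-type isomorphism $(k[\mf{g}]\otimes\mf{g})^{G}\cong (k[\mf{h}]\otimes\mf{h})^{W}$; differentiating the equivariance relation at diagonal $X$ shows $\partial F_{ik}/\partial e_{ik}\big|_{\mf{h}}=(g_k-g_i)/(\lambda_k-\lambda_i)$ is symmetric in $i,k$, so your compatibility (with $j=i$, $a=k$) restricts to $\partial g_i/\partial\lambda_k=\partial g_k/\partial\lambda_i$ on $\mf{h}$, and then the ordinary Poincar\'e lemma on $\mf{h}$ plus Chevalley gives the invariant potential. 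Without this (or an equivalent) argument the proof is incomplete precisely at the step that carries the proposition.
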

The proof below is due to Pavel Etingof, and it is shorter and nicer than our original proof.
\begin{proof}
        Let $G_1=G\ltimes V$ be the affine group $(G=Gl_n(k))$, and  $\mf{g}_1$ its Lie algebra, 
 so $\mf{g}_1=\mf{g}\ltimes V$. Let us identify $\mf{g}_1^{*}$ with $V^{*}\oplus \mf{g}$ via 
 the trace pairing.  The coadjoint representation of $\mf{g}_1$ has a dense orbit \cite{R1}. 
 This orbit $X$ is a principal homogeneous $G_1$-space and consists of pairs $(A,f), f\in V^*, A\in \mf{g}$, 
 such that $f, fA,..., fA^{n-1}$ are a basis of $V^*$. So the complement of the dense orbit $X$ 
 is the hypersurface defined by the equation $P(A,f)=0$, where $P\in O(\mf{g}_1^*)$ is 
 the determinant of the above vectors. The function
$P$ is a semi-invariant function with respect to the coadjoint action of $G_1,$ and 
any other such function is
a multiple of a power of $P,$ \cite{R1}. This implies that complement of $X$, which will be denoted by
$Z,$ is an irreducible variety.        

        Our goal is to show that $H^1(\mf{U}\mf{g}_1,\mf{U}\mf{g}_1)$ is 1-dimensional. This is of course the same as  $H^1(\mf{g}_1, k[\mf{g}_1^{*}])$.
        Now, consider instead $H^1(\mf{g}_1, k[\mf{g}_1^{*}][1/P])$. This is the same as 
 $H^1(\mf{g}_1, O(X))=H^1(\mf{g}_1, O(G_1)$). But for any affine algebraic group $R, H^*(\mf{r},O(R))$ 
 (where $\mf{r}$ is the Lie algebra of $R,$ and $R$ acts by left translations)
 is isomorphic to $H^*_{DR}(G),$ the De Rham cohomology of $R.$
        So $H^1(\mf{g}_1, O(G_1))$ is 1-dimensional in our case, and spanned by the derivation  $\phi$ sending $\mf{sl}(V)$ and $V$ to zero and $\Id$ to 1.

        Thus, all we need to show is that the map 
\\$H^1(\mf{g}_1, k[\mf{g}_1^{*}])\to H^1(\mf{g}_1,k[\mf{g}_1^{*}][1/P])$ is injective. Using the long exact sequence, for this it 
suffices to show that $H^0(\mf{g}_1, k[\mf{g}_1^{*}][1/P]/k[\mf{g}_1^{*}])=0.$

        Let $U_m$ be the space of elements of $k[\mf{g}_1^{*}][1/P]$ having a pole of degree at most $m$ at the surface $Z$ given by the equation $P=0.$ Then it suffices to show that $H^0(\mf{g}_1, U_m/U_{m-1})=0$ for $i\geq 1$
(as $k[\mf{g}_1^{*}][1/P]$ is the direct limit of $U_m/U_0$).
        Now, $U_m/U_{m-1}=O(Z)\otimes K^{-m}$, where $K$ is a 1-dimensional representation of $\mf{g}_1$ defined by 
its action on $P.$
        So it suffices to show that $\Hom_G(L^m,O(Z))=0$ for $m\neq 0.$

        To do so, let $h\in \Hom_G(L^m,O(Z)).$ So $h\in O(Z)$ is a function with the property that $h(gz)=\det(g)^mh(z)$ for all $z\in Z, g\in G.$ Pick a generic point $z=(A,f)$ of $Z$. In some basis, $A=diag(a_1,...,a_n)$ (with distinct $a_i$), and $f$ is a generic row vector with last coordinate 0. 
 Now consider $g_t$ a diagonal matrix, $g_t=diag(1, ..., 1, t).$ Since $A$ commutes with $g_t$ and $fg_t^{-1}=f$ we get that $h(z)=h(g_tz)=t^mh(z).$     
Thus, $h(z)=0$ for generic $z\in Z,$ hence $h=0.$

%        Let $Q(x)=(x-a_1)...(x-a_{n-1})$, and $B:=Q(A)-Q(a_n)/n.$
%        Then $B\in sl(V), and B^*f=-Q(a_n)f/n\ne 0$. So acting by the 1-parameter subgroup generated by $B$, we find that
%        $h(A,f)=h(z)=h(e^{tB}z)=h(A,e^{tQ(a_n)/n}f)=e^{tmQ(a_n)}h(a,f)$. So $h(A,f)=0$ and hence $h=0$ as $(A,f)$ was chosen generically.
%Correction: when I wrote $"B^*f=-Q(a_n)f/n\ne 0.$", I meant to say that $Q(a_n)\ne 0 $($f$ will be zero if $n=1)$.

\end{proof}

%Let $D:\mf{U}(\mf{g}\ltimes V)\to \mf{U}(\mf{g}\ltimes V)$ be an
%arbitrary derivation. Without loss of generality we may assume that
%$D$ vanishes on $\mf{g}.$ Indeed, in view of the simplicity of
%$\mf{sl_n}$, we may assume that $D(\mf{sl}_n)=0,$ and we may also
%assume that $D$ has a homogeneous degree with respect to $\Sym V$,
%i.e., \[D(V)=\mf{U}\mf{g}\ltimes \Sym^{m+1} ,\]
%\[
%D(\mf{U}\mf{g})\subset \mf{U}\mf{g}\ltimes \Sym^{m}V,
%\]
%and $D(\tau)$ commutes with $\mf{g}.$ Thus, if $m>0,$ then $[\tau,
%D\tau]=mD\tau,$ so if we set $D'=D+\frac{1}{m}\ad(D(\tau))$
% then $D'$ will vanish on $\mf{g}$ and we are done. So we may assume that $D$ has degree
%0, so $D(\tau)\in \mf{U}\mf{g}$ and $D(V)\subset \mf{U}\mf{g}\otimes V.$ Then for any $v\in V,$ we have
%$$D(v)=D([\tau, v])=[D(\tau), v]+[\tau, Dv]=[D\tau, v]+D(v)$$
%Thus we see that $D\tau$ commutes with $V.$ This implies that $D(\tau)\in k,$ therefore we may assume that %$D(\tau)=1.$ 
%Now we can just take
%$D-f$ and it will vanish on $\mf{g}.$

%\medbreak

%  Thus, we may assume that $D(\mf{g})=0$, in particular, $D$ commutes with $\ad(\tau)$,
%  so $D(V)\in \mf{U}\mf{g}\otimes V.$ We will proceed by downwards induction on the maximal
%  filtration degree of $D(v)$ in $\mf{U}(\mf{g}\ltimes V).$ If this maximal degree is 1, then $D|V$ must be
%  a scalar multiple of the identity (since $V$ is irreducible as a $\mf{g}$-module), so
%  $D=c\ad(\tau)$ for some $c\in k.$

\medbreak

\medbreak

 Thus we conclude that there exist elements $c_i\in \mf{Z}(\mf{U}\mf{g}), i=1,...,n$ such
 that elements $\eta_i=\sum_{j=1}^n[\beta_i, y_j]x_j-c_i, j=1, ..., n$ freely
 generate the center of $H_c.$ Since $\beta_1=\Id$ (the identity matrix)
  we have $[c_1, v]=\sum_jy_j[x_j, v],$ now since
 $[x_j, v]\in \mf{U}\mf{g},$ we get that
  $$[\beta_i, [c_1,v]]=[[\beta_i, y_j],[x_j, v]]=[\sum [\beta_i, y_j]x_j, v],$$
  thus $c_i$ is a unique element of $\mf{Z}(\mf{U}\mf{g})$ such that $[c_i, v]=[\beta_i, [c_1, v]]$ for all $v\in V.$
\section{Category $\bggo{}$}
We will denote by $\mf{h}$ the standard Cartan subalgebra of $\mf{g}$ 
consisting of the diagonal matrices.

\medbreak

We have the following natural analog of the Bernstein-Gelfand-Gelfand category $\bggo{}$ defined 
for semi-simple Lie algebras. Recall the definition from \cite{EGG}.

\begin{defin}
Category $\bggo{}$ is defined as the full subcategory of finitely
generated left $H_c-$modules on which $\mf{h}$ acts diagonalizably
and $H_{+}=\mf{U}(\mf{b}_{+}\ltimes V^{*})$ acts locally finitely, where
$\mf{b}_{+}$ denotes the standard Borel subalgebra of upper triangular
matrices.
\end{defin}

This category decomposes into a direct sum of blocks with respect to
the center of $H_c$ in the standard way. We also have the standard
definition for Verma modules \cite{EGG}.

\begin{defin}
Let $\lambda\in \mf{h}^{*}$ be a weight. The Verma module with
highest weight $\lambda$ is defined as
$M(\lambda)=H_c\otimes_{H_{+}} kv_{\lambda}$, where
$kv_{\lambda}$ is  the one-dimensional representation of $H_{+}$
corresponding to the weight ${\lambda}.$
\end{defin}

By $L(\lambda)$ we denote the unique irreducible quotient of
$M(\lambda),$ and it easily follows that every simple object in the
category $\bggo{}$ is obtained in this manner.

\medbreak

Let $M(\lambda)$ be a Verma module of highest weight $\lambda\in
\mf{h}^{*}$. Then $\eta_im=\psi(c_i)(\lambda)m$ for all $m\in M(\lambda),$ where
$\psi:\mf{Z}(\mf{U}\mf{g})\to k[\mf{h}^{*}]$ is the usual
\\Harish-Chandra homomorphism of $\mf{g}.$ Let us denote by
$k[\mf{h}^{*}]_c$ the image of $k[c_1, c_2, ..., c_n]$ under $\psi$.
%and denote by $\Psi_c :\mf{h}^{*}=\spec k[\mf{h}^{*}]\to \spec 
%k[\mf{h}^{*}]_c$ the composition of $\psi$ with
%$\spec\mf{Z}(\mf{U}\mf{g})\to \spec k[\mf{h}^{*}]_c.$ 
Let us denote by $\Psi_c :\mf{h}^{*}=\spec k[\mf{h}^{*}]\to \spec
k[\mf{h}^{*}]_c$ the map corresponding to the restriction of $\psi$ on $k[c_1, ..., c_n]\subset \mf{Z}(\mf{U}\mf{g})$.
To summarize, we
have the following.
\begin{prop} Two Verma modules $M(\lambda), M(\mu)$ (or irreducible modules $L(\lambda), L(\mu)$) 
belong to the same block of
the category $\bggo{}$ if and only if $\Psi_c(\lambda)=\Psi_c(\mu).$
\end{prop}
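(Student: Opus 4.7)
The plan is to pin down the central character of each Verma module $M(\lambda)$ explicitly; once that is done, the proposition follows from the block decomposition of $\bggo{}$ by $\mf{Z}(H_c)$ noted just before the definition of Verma modules. By Theorem \ref{Tqow} together with the discussion following it, $\mf{Z}(H_c)=k[\eta_1,\dots,\eta_n]$, so a central character on $\bggo{}$ is simply the $n$-tuple of scalars by which the $\eta_i$ act, and two modules with pure central character lie in the same block precisely when these tuples coincide.

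I would next compute $\eta_i v_\lambda$, where $v_\lambda\in M(\lambda)=H_c\otimes_{H_+}kv_\lambda$ is the canonical generator. By the definition of the Verma module, $V^*\subset H_+$ annihilates $v_\lambda$ while $\mf{b}_+$ acts on it through the weight $\lambda$. Writing $t_i=\sum_j[\beta_i,y_j]x_j$ with every $x_j\in V^*$ sitting on the right gives $t_iv_\lambda=0$ at once, while $c_i\in\mf{Z}(\mf{U}\mf{g})\subset H_+$ acts on $v_\lambda$ by the scalar $\psi(c_i)(\lambda)$ via the Harish-Chandra homomorphism. Hence $\eta_i=t_i-c_i$ acts on $v_\lambda$ by a scalar determined by $\psi(c_i)(\lambda)$; centrality of $\eta_i$ together with cyclicity of $M(\lambda)$ then propagates this scalar action to all of $M(\lambda)$.

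Consequently the central character of $M(\lambda)$, viewed as a homomorphism $k[\eta_1,\dots,\eta_n]\to k$, factors as $k[c_1,\dots,c_n]\overset{\psi}{\to}k[\mf{h}^*]_c\overset{\mathrm{ev}_\lambda}{\to}k$, i.e., it is nothing other than evaluation of $\Psi_c$ at $\lambda$. Hence $M(\lambda)$ and $M(\mu)$ have the same central character iff $\Psi_c(\lambda)=\Psi_c(\mu)$, and invoking the block decomposition of $\bggo{}$ closes the argument for Verma modules. The statement for irreducibles $L(\lambda)$ is immediate, since each $L(\lambda)$ is a quotient of $M(\lambda)$ and therefore inherits its central character. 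The only mildly delicate point is the identity $t_iv_\lambda=0$, which is automatic from the chosen presentation of $t_i$ with the $x_j$'s on the right combined with $x_jv_\lambda=0$; beyond that, no substantive obstacle arises.
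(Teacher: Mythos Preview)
Your argument is essentially the one the paper has in mind: the paper states just before the proposition that $\eta_i m=\psi(c_i)(\lambda)m$ for all $m\in M(\lambda)$ and then regards the proposition as an immediate consequence of the block decomposition by central characters, exactly as you do.

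One small slip: you write ``$c_i\in\mf{Z}(\mf{U}\mf{g})\subset H_+$'', but $\mf{Z}(\mf{U}\mf{g})$ is \emph{not} contained in $H_+=\mf{U}(\mf{b}_+\ltimes V^*)$ (already the Casimir involves terms from $\mf{n}_-$). The correct justification is the standard Harish--Chandra decomposition: write $c_i=\psi(c_i)+x$ with $x\in\mf{U}\mf{g}\,\mf{n}_+$; since $\mf{n}_+\subset H_+$ annihilates $v_\lambda$, one gets $c_i v_\lambda=\psi(c_i)(\lambda)v_\lambda$. With this adjustment your proof is complete and matches the paper's approach.
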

\begin{prop} For $c\neq 0,$ the Harish-Chandra map is finite.

\end{prop}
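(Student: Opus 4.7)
The plan is to reduce finiteness to a leading-symbol computation in the PBW filtration of $\mf{U}\mf{g}$. Since $k[\mf{h}^{*}]$ is already finite over $k[\mf{h}^{*}]^W = \psi(\mf{Z}(\mf{U}\mf{g}))$ by Chevalley's theorem, it is enough to prove that $\mf{Z}(\mf{U}\mf{g}) = k[\beta_1,\ldots,\beta_n]$ is a finite module over its subring $k[c_1,\ldots,c_n]$. Both sides are polynomial rings of Krull dimension $n$, so by the standard graded-to-filtered correspondence, finiteness holds provided the principal symbols $\bar c_i\in\Sym(\mf{g})^{\mf{g}}=k[Q_1,\ldots,Q_n]$ form a homogeneous system of parameters.

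The first concrete step is to extract $[c_1,v]$ from centrality of $\eta_1=\sum_j y_jx_j-c_1$: a direct bracket in $\mf{U}(\mf{g}\ltimes V)$ gives $[c_1,v]=-\sum_j y_j\,c(v,x_j)$. Applying $[\beta_i,\cdot]$ and using centrality of $\beta_i$ in $\mf{U}\mf{g}$ yields
\[
[c_i,v] = -\sum_j [\beta_i,y_j]\, c(v,x_j).
\]
Taking principal symbols, with $\sigma([\alpha,y_c])_a=\partial_{e_{ac}}\bar\alpha$ and $\sigma([\beta_i,y_j])_a$ expressible via the gradient matrix $B_{i-1}=\nabla Q_i$, this becomes a first-order gradient equation for $\bar c_i$ whose right-hand side is built from $B_{i-1}$ and the principal symbol of $c$. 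Writing the top PBW degree of the pairing as $d$ with leading coefficient $r_d\neq 0$, and using the matrix identity $XB_k=B_{k+1}+(-1)^k Q_{k+1}I$ to reduce matrix products, the PDE integrates to an explicit $\mf{g}$-invariant polynomial: for $i+d\le n$, the leading $r_d$-contribution to $\bar c_i$ is proportional to $Q_{i+d}$, so $c_i$ has PBW degree $i+d$.

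For $d=0$ this immediately gives $\bar c_i=-r_0 Q_i$ and the claim follows, so assume $d\geq 1$. Then $\bar c_1,\ldots,\bar c_{n-d}$ are proportional to $Q_{d+1},\ldots,Q_n$. For the remaining indices $i>n-d$ the expected leading symbol $Q_{i+d}$ vanishes by Cayley--Hamilton, forcing the actual PBW degree of $c_i$ to drop, and one must track subleading contributions (which involve the remaining generators $Q_1,\ldots,Q_d$ via the expansion of $X^d B_{i-1}$). Careful bookkeeping should show that these give algebraically independent invariants completing a homogeneous system of parameters. Graded Noether normalization then yields finiteness of $\Sym(\mf{g})^{\mf{g}}$ over $k[\bar c_1,\ldots,\bar c_n]$, which lifts via the PBW filtration to finiteness of $\mf{Z}(\mf{U}\mf{g})$ over $k[c_1,\ldots,c_n]$, completing the proof.

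The main obstacle is the independence analysis in the regime $i+d>n$, where Cayley--Hamilton forces the top $r_d$-symbol to vanish and subleading contributions of differing PBW degrees must be compared and shown not to cancel for any nonzero $c$. An alternative route bypassing this delicate bookkeeping would be to argue quasi-finiteness of $\Psi_c$ fibres directly: for each $(\alpha_1,\ldots,\alpha_n)\in k^n$, show that $\{\lambda\in\mf{h}^{*}:\psi(c_i)(\lambda)=\alpha_i\text{ for all }i\}$ is a finite set, and then invoke Zariski's main theorem to upgrade quasi-finiteness to finiteness of the morphism.
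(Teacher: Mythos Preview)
Your overall strategy coincides with the paper's: reduce to showing that $k[\beta_1,\dots,\beta_n]$ is finite over $k[c_1,\dots,c_n]$ at the level of top symbols in $k[\mf{g}]^G$, and derive a first-order gradient equation for $\bar c_i$ from the centrality relation $[c_i,v]=-\sum_j[\beta_i,y_j]\,c(v,x_j)$. The paper does exactly this, restricting to diagonal matrices $A=\mathrm{diag}(\lambda_1,\dots,\lambda_n)$ to convert the matrix-valued gradient into partial derivatives $\partial/\partial\lambda_i$.

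Where the approaches diverge is precisely at the point you flag as the ``main obstacle'': the regime $i+d>n$ where the naively expected leading symbol $Q_{i+d}$ vanishes and one must control subleading terms. You leave this as bookkeeping to be done; the paper bypasses it entirely with a generating-function trick. Rather than treating each $c_i$ separately, the paper forms $c'=\sum_j t^{n-j}c_j$, so that the individual PDEs combine into a single equation whose right-hand side has the closed form
\[
\frac{\partial c'}{\partial\lambda_1}=-\frac{\prod_{i\ge 2}(t-\lambda_i)}{\prod_i(1-\tau\lambda_i)\,(1-\tau\lambda_1)},
\]
and one checks by elementary manipulation that
\[
c'=\frac{\prod_{i=1}^n(t-\lambda_i)}{(t\tau-1)\prod_{i=1}^n(1-\tau\lambda_i)}
\]
solves it. Writing $c'=\sum_{i,j}\eta_i^{\,j}\,t^j\tau^i$, the top symbol of $c_j$ is (that of) $\eta_m^{\,n-j}$. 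The finiteness then drops out in one line: setting $t=\lambda_1$ kills the numerator, so $\sum_j\lambda_1^{\,j}\eta_m^{\,j}=0$, and since the coefficients $\eta_m^{\,j}$ for $j\ge n$ are scalars while $\eta_m^{\,n-i}$ is the symbol of $c_i$, this is a monic-up-to-scalar integral equation for $\lambda_1$ over $k[c_1,\dots,c_n]$.

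So your proposal is not wrong, but it stops short of the key step. The missing idea is to package the $c_i$ into a two-variable generating series; this replaces the delicate case analysis you anticipate by a single substitution $t=\lambda_1$. Your alternative route via quasi-finiteness and Zariski's main theorem would require a separate verification that the fibres are zero-dimensional, which is not obviously easier than the direct symbol computation.
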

\begin{proof}
We need to show that $\mf{Z}(\mf{U}\mf{g})=k[\beta_1, ..., \beta_n]$ is finite over $k[c_1, ..., c_n].$  For this we will show 
that the ring of symmetric polynomials in $x_1, ..., x_n$ is finite over the ring generated by the top terms
of images of $c_i$ under the identification $k[\mf{g}]^{G}=\mf{Z}(\mf{U}\mf{g}).$

Next, we will need to recall the precise computation of the pairing $c.$ According to the computation
from \cite{EGG}, for the given deformation parameter $b=b_0+b_1\tau+\cdots+\tau^m\in k[\tau], m>0$ one has

$$[y, x] =c(y, x)= b_0r_0(x, y) + b_1r_1(x, y) +\cdots +r_m(x, y)$$

%$$[x_i, y_j]=det(1-\tauA)^{-1}(x_1, (1-\tau A)^{-1}y_j)$$
where $r_i\in\mf{U}\mf{g}$ are the symmetrizations of the following functions on $\mf{g}:$
$$(x, (1 - tA)^{-1}y)\det(1 - tA)^{-1} = r_0(x, y)(A) + r_1(x, y)(A)t + r_2(x, y)(A)t^2 + \cdots.$$

Recall that if we set $\sum \beta_jt^{n-j}$ to be the symmetrization of $\det(t\Id-A)=\sum Q_{n-j}(A)t^j\in k[\mf{g}][t]$ (where $\mf{g}=\mf{gl}_n$ and $\beta_j\in \mf{Z}(\mf{U}\mf{g}$),
then $\sum_i[\beta_j, y_i]x_i$ is in the center of $H$ (undeformed case). 
To lift these elements to the center of $H_c,$
we want to find $c_i\in \mf{Z}(\mf{U}\mf{g})$ such that
 $$\sum_j[\beta_j, y_j][x_j, y]=[c_i, y], i=1,\cdots ,n,$$
for all $y.$ Of course, this is enough to do for $y=y_1.$ Below we will compute the top symbol of $c_i.$  
We will write these equalities  in $k[\mf{g}]\otimes V$ via the symmetrization map. In this case
if $f\in k[\mf{g}],$ then 
$[f, y_i]=\sum_j \frac {\partial f}{\partial e_{ij}}.$ 
We will consider diagonal matrices $A=diag(\lambda_1,..., \lambda_n).$
For functions appearing in the above equalities we have that $\frac{\partial }{\partial e_{ij}}=0$
 if $i\neq j$ and $\frac{\partial }{\partial e_{ij}}=\frac{\partial }{\partial \lambda_i}$ for $i=j.$
Thus, we get

 $$\sum_i \sum_t \frac {\partial b_j}{\partial e_{it}}y_t[x_i, y_1]=
 \sum_t\frac {\partial c_j}{\partial e_{1t}} y_t.$$

This gives 
$$\frac{\partial c_j}{\partial \lambda_1}=\det(1-\tau A)^{-1} (x_1, (1-\tau A)^{-1}y_1)\frac{\partial Q_j}
{\partial \lambda_1}.$$
Now multiply this equation by $t^{n-j}$ and add. Then
we want to find  $c'=\sum_{j=1}^nt^{n-j}c_j,$ which is a symmetric function in $\lambda_1, ..., \lambda_n$ with values in
$k[t][[\tau]]$, such that 
$$\frac{\partial c'}{\partial \lambda_1}=-\frac{\prod_{i=2}^n (t-\lambda_i)}{\prod_{i=1}^n(1-\tau\lambda_i)(1-\tau\lambda_1)}.$$
This element is given by the following formula:
$$c'=\frac{\det(t-A)}{(t\tau-1)\det(1-\tau A)}=\frac{\prod_{i=1}^n(t-\lambda_i)}{(t\tau-1)\prod_{i=1}^n(1-\tau\lambda_i)}.$$
 This follows from $$\frac{t-\lambda_i}{(1-\lambda_i\tau)(t\tau-1)}=
 \frac{\tau^{-1}(1-\tau\lambda_i)+t-\tau^{-1}}{(t\tau-1)(1-\tau\lambda_i)}=
 \frac{\tau^{-1}}{t\tau-1}+\frac{\tau^{-1}}{1-\tau\lambda_i}.$$
 
 To summarize, if we write
$$\frac{\prod_{i=1}^n(t-\lambda_i)}{(t\tau-1)\prod_{i=1}^n(1-\tau\lambda_i)}=\sum_i\sum_j \eta_i^jt^j\tau^i$$
with $\eta_i^j\in k[\lambda_1, ..., \lambda_n]$ a symmetric polynomial, 
then the top symbol of $c_j$ 
is equal to the top symbol of the symmetrization of $\eta_m^{n-j}.$ 
 
 It is now easy to see that $k[\lambda_1, ..., \lambda_n]$ is finite over $k[c_1, ..., c_n].$
Indeed, from the formulas above we see that $\sum_j \lambda_1^j\eta_m^j=0$ and $\eta_m^{n-i}=c_i$ for $i=1,\cdots ,n,$
and $\eta_j\in k$ for $j>n-1,$ so we are done.

\end{proof}
For example, if $c:V\times V^{*}\to k$ is the standard pairing (in which case $H_c=\mf{U}\mf{g}\ltimes Weyl(V)$)
then the Harish-Chandra map $\Psi_c$ is an isomorphism.

We have the following analog of Duflo's theorem (there is a similar result for
infinitesimal Hecke algebras of $\mf{sl}_2$ \cite{KT}).
\begin{theorem} If $c\neq 0,$ then all objects in the category 
$\bggo{}$ have finite length, and any  
primitive ideal of $H_c$  is equal to $\Ann L(\lambda)$ for some weight $\lambda\in \mf{h}^{*}.$ 

\end{theorem}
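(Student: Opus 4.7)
The plan is to derive both assertions from the finiteness of the Harish-Chandra map $\Psi_c$ established in the preceding proposition, following the standard template for Duflo-style theorems.

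For the finite length claim, I would first prove it for Verma modules: every composition factor $L(\mu)$ of $M(\lambda)$ must share the central character of $M(\lambda)$, i.e.\ satisfy $\Psi_c(\mu)=\Psi_c(\lambda)$, and by the finiteness of $\Psi_c$ only finitely many such $\mu$ lie in the weight support of $M(\lambda)$; since each weight space of $M(\lambda)$ is finite-dimensional, the composition series must terminate. A general $M\in\bggo{}$ is finitely generated, $\mf{h}$-semisimple, and $H_+$-locally finite, so $\mf{h}$-weight generators of $M$ that are simultaneously $H_+$-finite produce a surjection from a finite direct sum of Verma modules onto $M$, giving finite length for $M$.

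For the primitive-ideal claim, let $I=\Ann V$ with $V$ a simple $H_c$-module. Schur's lemma forces $\mf{Z}(H_c)$ to act on $V$ through a character $\chi$, so $I\cap\mf{Z}(H_c)$ is a maximal ideal; finiteness of $\Psi_c$ then restricts the candidates to the finite set of $\lambda\in\mf{h}^{*}$ with $\Psi_c(\lambda)=\chi$. To identify $I=\Ann L(\lambda)$ for one of these, I would imitate the strategy of \cite{KT}: use the PBW structure of $H_c$ together with the fixed central character to exhibit in $V$ a nonzero vector of pure $\mf{h}$-weight that is annihilated by $\mf{n}_{+}\ltimes V^{*}$. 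Such a vector induces a nonzero map $M(\lambda)\to V$, and by simplicity $V\cong L(\lambda)$, whence $I=\Ann L(\lambda)$.

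The main obstacle is producing the highest-weight vector in an arbitrary simple $V$ --- this is precisely the step that is hardest in the classical Duflo theorem. The finiteness of $\Psi_c$ is what makes the reduction tractable, since it forces $V$ to be controlled by finitely many Harish-Chandra characters; a standard $\mf{h}$-weight and Noetherian argument, modeled on the $\mf{sl}_2$ treatment in \cite{KT}, should then produce the required highest-weight vector and close the argument.
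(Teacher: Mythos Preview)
Your treatment of the finite-length assertion is fine and is essentially the standard argument the paper leaves implicit: finiteness of $\Psi_c$ bounds the possible highest weights of composition factors in a given block, Verma modules have finite-dimensional weight spaces, and every object of $\mathcal{O}$ is a quotient of a finite sum of Vermas.

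The primitive-ideal part, however, contains a genuine conceptual gap. You aim to show that an arbitrary simple $H_c$-module $V$ actually \emph{is} some $L(\lambda)$, by locating in $V$ a vector annihilated by $\mathfrak{n}_{+}\ltimes V^{*}$. But this is strictly stronger than the statement being proved, and in general it is false: already for $\mathfrak{U}\mathfrak{g}$ there are simple modules (Whittaker modules, principal-series type modules, etc.) with no highest-weight vector whatsoever, yet Duflo's theorem still asserts that their annihilators coincide with $\Ann L(\lambda)$ for suitable $\lambda$. The ``standard $\mathfrak{h}$-weight and Noetherian argument'' you allude to cannot exist, because its conclusion is not true; this is exactly why Duflo's theorem is hard and is a statement about \emph{ideals}, not about modules.

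The paper sidesteps this entirely by invoking Ginzburg's generalization of Duflo's theorem \cite{G}. One equips $H_c$ with a noncommutative triangular structure: set $A_{+}$ to be the subalgebra generated by $\mathfrak{n}_{+}\ltimes V^{*}$, $A_{-}$ the subalgebra generated by $\mathfrak{n}_{-}\ltimes V$, and choose $\delta\in\mathfrak{h}\oplus k\tau$ so that $\ad(\delta)$ has strictly positive (resp.\ negative) integer eigenvalues on $A_{+}$ (resp.\ $A_{-}$). Given a simple $V$, Schur's lemma gives a maximal ideal $m\subset\mathfrak{Z}(H_c)$ annihilating $V$; finiteness of $\Psi_c$ is used to check that $H_c/mH_c$ is finitely generated as an $(A_{+},A_{-})$-bimodule. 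Ginzburg's theorem then yields directly that $\Ann(V)=\Ann L(\lambda)$ for some $\lambda$, without ever producing a highest-weight vector inside $V$. You should replace your highest-weight-vector search with this black-box application.
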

\begin{proof} This is just an easy consequence of finiteness of the analog of
the Harish-Chandra map $\Psi_c$ and of Ginzburg's generalization of Duflo's
theorem \cite{G}. Indeed, if $M$ is a simple $H_c$-module, then Schur's lemma implies that $M$ is
annihilated by some maximal ideal $m\subset \mf{Z}(H_c).$ Now let us consider the following
(non-unital) subalgebras of $H_c$: let $A_{+}$ denote the subalgebra of $H_c$ generated by
$\mf{n}_{+}\ltimes V^{*},$ and let $A_{-}$ denote the subalgebra generated by $\mf{n}_{-}\ltimes V.$
Also, let $\delta$ be equal to $-a\tau+h$ where $a$ is a sufficiently big positive number
and $h\in \mf{h}$ is a sufficiently generic element such that $\ad(h)$ has strictly positive
eigenvalues on $\mf{n}_{+}.$ Thus $\ad(\delta)$ has strictly positive (respectively negative) integer
eigenvalues on $A_{+}$ (respectively $A_{-})$. Now the triple $(A_{+}, A_{-}, \delta)$ gives what is called
a noncommutative triangular structure on $H_c,$ and since $H_c/mH_c$ is finitely generated
as $A_{+}-A_{-}$ bimodule (this follows from finiteness of $\Psi_{c}$),
Ginzburg's theorem implies that $\Ann(M)$ is the annihilator of a simple object in the category $\bggo{}$.

\end{proof}

\medbreak

\medbreak

Let us briefly discuss the characteristic $p$ case. We have
\begin{prop}
If char($k)\gg 0$, then the $p$-th powers of $V, V^{*}$ and
$a^p-{a}^{[p]}, a\in \mf{g}$ (restricted powers) belong to the center of $H_c.$
\end{prop}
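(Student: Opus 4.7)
The proof plan hinges on the fundamental characteristic-$p$ identity $\ad(z)^p=\ad(z^p)$, valid for any element $z$ of an associative algebra in characteristic $p$. This follows by writing $\ad(z)=L_z-R_z$ for the commuting operators of left and right multiplication by $z$; then $(L_z-R_z)^p=L_z^p-R_z^p=\ad(z^p)$ since the binomial coefficients $\binom{p}{i}$ for $0<i<p$ vanish in characteristic $p$. Since $H_c$ is generated by $\mf{g}\cup V\cup V^{*}$, to prove that a given element is central it will suffice to compute its $\ad$-action on this generating set.

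For $a^p-a^{[p]}$ with $a\in \mf{g}$, I would show $\ad(a)^p=\ad(a^{[p]})$ by checking agreement on generators. On $\mf{g}$ this is the defining property of the restricted structure on $\mf{gl}_n$. On $V$ the bracket $[a,y]$ is matrix multiplication by $a$, so $\ad(a)^p$ acts as $a^p=a^{[p]}$; analogously on $V^{*}$. Combined with the char-$p$ identity, $\ad(a^p-a^{[p]})=0$ as a derivation, so $a^p-a^{[p]}$ is central. This step in fact works for every prime $p$, so the hypothesis $p\gg 0$ is not needed here.

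For $y^p$ with $y\in V$, the plan is to show $\ad(y)^p=0$ on $H_c$ once $p$ is large. Trivially $\ad(y)(V)=0$, and $\ad(y)^2(\mf{g})=0$ because $[y,a]=-a\cdot y\in V$. The essential case is $V^{*}$, where $\ad(y)(w)=c(y,w)\in \mf{U}\mf{g}$ is a polynomial in $\mf{g}$ of some bounded degree $m$ determined by the deformation parameter. The key lemma, proved by induction on $k$, is that for $\alpha\in \mf{U}\mf{g}_{\leq k}$ one has $\ad(y)^{k+1}(\alpha)=0$: Leibniz together with $[y,\mf{g}]\subseteq V$ places $\ad(y)(\alpha)$ into $\mf{U}\mf{g}_{\leq k-1}\cdot V$ in PBW form (with $\mf{g}$ preceding $V$), and since $\ad(y)$ annihilates $V$, iteration gives $\ad(y)^{j}(\alpha)\in \mf{U}\mf{g}_{\leq k-j}\cdot \Sym^{j}V$, which vanishes for $j>k$. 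Applied to $\alpha=c(y,w)\in \mf{U}\mf{g}_{\leq m}$, this yields $\ad(y)^{m+2}(w)=0$; hence for $p\geq m+2$ the derivation $\ad(y)^p$ kills every generator of $H_c$, so $[y^p,\cdot]=\ad(y^p)=\ad(y)^p=0$, making $y^p$ central.

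The case $x^p$ with $x\in V^{*}$ is entirely parallel by a symmetric calculation; alternatively it follows by transporting the conclusion for $V$ through the anti-involution $j$ from Section 2, which interchanges $V$ and $V^{*}$. The principal technical point throughout is the degree-tracking lemma for $\ad(y)$ acting on $\mf{U}\mf{g}$, but this is elementary once one invokes the PBW decomposition of $H_c$ and the relation $[y,\mf{g}]\subseteq V$.
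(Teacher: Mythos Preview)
Your proposal is correct and follows essentially the same approach as the paper: both use the identity $\ad(z)^p=\ad(z^p)$, dispose of the case $a^p-a^{[p]}$ by checking the action on generators, and for $v^p$ reduce to showing $\ad(v)^{p-1}(c(v,w))=0$ via the observation that $\ad(v)$ lowers the $\mf{U}\mf{g}$-filtration degree (your degree-tracking lemma $\ad(y)^j(\mf{U}\mf{g}_{\leq k})\subseteq \mf{U}\mf{g}_{\leq k-j}\cdot\Sym^j V$ is exactly the mechanism behind the paper's one-line claim). Your treatment is somewhat more explicit, in particular in handling $V^{*}$ via the anti-involution $j$ and in noting that the $a^p-a^{[p]}$ case needs no largeness hypothesis on $p$.
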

\begin{proof} It is clear that for any $a\in \mf{g},$ the element 
$a^p-a^{[p]}$ belongs to the center of $H_c$ when $p>\dim V.$
Let $v\in V$. Clearly, $v^p$ commutes with $V$ and $\mf{U}\mf{g}.$
Thus it remains to show that $[v^p, w]=0$ for any $w\in V^{*}.$
Recall the well-known identity in any algebra of characteristic $p:$
$\ad(a)^p=\ad(a^p).$ We have $[v^p, w]=\ad(v)^{p-1}([v, w])$; but
$[v, w]$ is an element of $\mf{U}\mf{g}$ whose filtration degree 
with respect to the standard filtration on $\mf{U}\mf{g}$ is 
less than $p-1$. Therefore $\ad(v)^{p-1}([v, w])=0$, and we are done.
\end{proof}

Let us denote by $Z_0$ the subalgebra of $H_c$ generated by the elements in the proposition. Then it follows from the above proposition
and the PBW property of $H_c$ that $Z_0\subset Z(H_c)$ and $H_c$ is a free $Z_0$-module of rank $p^{\dim(H_c)}=p^{n^2+2n}.$
There is a similar result for rational Cherednik algebras,
in which case $\Sym (V^p)^\Gamma, \Sym ((V^{*})^p)^{\Gamma}\subset\mf{Z}(H_c)$, where $\Gamma$ is 
the corresponding reflection group.

It is known that for Cherednik algebras in positive characteristic (as long as the characteristic
does not divide the order of the reflection group), the smooth and Azumaya loci coincide \cite{BC}.
This leads to the following.
\begin{Conj} For $p\gg 0,$ we have that $\Gr \mf{Z}(H_c)=\mf{Z}(\Gr H_c)$ and the smooth and the Azumaya
loci of $\mf{Z}(H_c)$ coincide.

\end{Conj}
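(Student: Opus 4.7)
The plan is to handle the two claims separately. For the equality $\Gr \mf{Z}(H_c) = \mf{Z}(\Gr H_c)$ I would use a spreading-out argument reducing to the characteristic zero statement of Theorem \ref{Tqow}; for the coincidence of the smooth and Azumaya loci I would adapt the Poisson-order strategy used in \cite{BC}.

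For part (a), spread $H_c$ out over a finitely generated subring $R\subset k$ on which the deformation parameter $c$ and the central elements $\eta_1,\ldots,\eta_n$ constructed in Section 3 have integral representatives. For every prime $p$ larger than the finitely many denominators involved, the mod-$p$ reductions $\bar\eta_i$ are central elements of the positive-characteristic $H_c$ whose top symbols are precisely $t_1,\ldots,t_n\in\mf{U}L$. Together with the generators of $Z_0$ from Proposition 5.1 they cut out a subalgebra $A\subset\mf{Z}(H_c)$. One then computes $\mf{Z}(\Gr H_c)$ directly: by Theorem 2.1 it contains $k[t_1,\ldots,t_n]$, and the Frobenius-twisted copies of $\Sym V$, $\Sym V^{*}$ and $\mf{U}\mf{g}$ coming from Proposition 5.1 account for the rest. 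A rank comparison over $Z_0$, using the PBW freeness $[H_c:Z_0]=p^{n^2+2n}$, then forces $A=\mf{Z}(H_c)$ and gives the desired equality.

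For part (b), part (a) identifies $H_c$ as a free module of constant rank over $\mf{Z}(H_c)$. The commutator bracket $\tfrac{1}{p}[\cdot,\cdot]$ lifted from characteristic zero endows $\mf{Z}(H_c)$ with a Poisson structure and makes $H_c$ a Poisson $\mf{Z}(H_c)$-order. The generic fiber is Azumaya by a PI-degree count, so the Azumaya locus is a nonempty open subset that is a union of symplectic leaves. The standard Poisson-order argument of \cite{BC} --- showing that the Azumaya locus is precisely the complement of the zero locus of the discriminant, which in turn coincides with the smooth locus of $\mf{Z}(H_c)$ --- then transports verbatim to the infinitesimal Cherednik setting once part (a) is in hand.

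The main obstacle will be part (a), specifically the lifting step. The characteristic zero proof of Proposition \ref{TWeiwei} rests on the identification $H^1(\mf{g}_1,O(G_1))=H^1_{DR}(G_1)$ together with a semi-invariant argument on the complement of the dense coadjoint orbit, and neither ingredient transports automatically to characteristic $p$. The spreading-out approach sidesteps this by invoking only the \emph{existence} of the $\eta_i$ integrally, but one must then carefully bound the set of primes at which a reduction could accidentally drop in filtration or collapse onto an element of $Z_0$; this denominator-tracking, together with the verification that the explicit top-symbol formulas from Section 4 remain nondegenerate modulo $p$, is the delicate point of the argument.
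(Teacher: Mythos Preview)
The statement you are attempting to prove is labelled a \emph{Conjecture} in the paper, and the paper does not contain a proof of it. What follows the conjecture in the text is only evidence: the remark that $\mf{U}\mf{sl}_{n+1}$ (realized as an infinitesimal Cherednik algebra of $\mf{gl}_n$) satisfies it, and a complete verification in the case $n=1$ via the subsequent Proposition. That $n=1$ argument is entirely explicit: one writes down the Casimir $\Delta_c=ef+z(h)$ together with $e^p,f^p,h^p-h$, checks by hand that their symbols generate $\mf{Z}(\Gr H_c)$, and obtains the Azumaya/smooth coincidence by showing directly that every simple module with $\chi(e^p)\neq 0$ or $\chi(f^p)\neq 0$ has dimension $p$, so that the non-Azumaya locus has codimension $\geq 2$ and Brown--Goodearl \cite{BG} applies. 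There is no general proof to compare your proposal against.

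As for the proposal itself, it is a plausible outline but with real gaps rather than a proof. In part (a), computing $\mf{Z}(\Gr H_c)=\mf{Z}(\mf{U}L)$ in characteristic $p$ is not covered by Theorem~2.1, which is a characteristic-zero statement; you assert that the $p$-center together with $k[t_1,\dots,t_n]$ exhausts it, but this is exactly the content that needs proof, and the ``rank comparison over $Z_0$'' you invoke does not by itself rule out further central elements. In part (b), the bracket ``$\tfrac{1}{p}[\cdot,\cdot]$'' is not literally defined in characteristic $p$; one needs a genuine Poisson-order construction (e.g.\ via an integral form and reduction, or a Hayashi-type argument), and the claim that the discriminant locus coincides with the singular locus is precisely the hard geometric input in \cite{BC}, which relies on features specific to symplectic reflection algebras and does not transport ``verbatim''. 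These are the points where a proof of the conjecture would have to do substantial new work.
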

Positive evidence for this conjecture is given by $\mf{U}\mf{sl}_{n+1},$ which can be
realized as an infinitesimal Cherednik algebra of $\mf{gl}_n.$ Next, we will verify the above
conjecture for $n=1.$

Let us fix the notation: $H_c$ denotes an algebra generated by elements $h, e, f$ with relations 
$[h, e]=e, [h, f]=-f, [e, f]=c(h),$ where $c$ is a polynomial in one variable over the ground field $k,$
which will be assumed to be algebraically closed of characteristic $p.$ Also, we will assume that $\deg c<p-1$
This is an infinitesimal Cherednik algebra for $\mf{gl}_1.$ This algebra was first considered by Smith \cite{S}.
The follwoing result also follows from \cite{BC}.
\begin{prop} We have $\Gr \mf{Z}(H_c)=\mf{Z} (\Gr (H_c)).$ Moreover, 
the smooth and Azumaya loci
of $H_c$ coincide, and the PI-degree of $H_c$ is equal to $p.$
\end{prop}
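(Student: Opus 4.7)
The approach is to compute everything explicitly using the smallness of Smith's algebra. First, I would identify $\mf{Z}(\Gr H_c)$: in $\Gr H_c \cong \mf{U}(\mf{gl}_1 \ltimes (V \oplus V^{*}))$ the relations are $[h,e]=e$, $[h,f]=-f$, $[e,f]=0$, so in characteristic $p$ the center is generated by $h^p$ (central because $\ad(h)^p(e) = p \cdot e = 0$ and similarly on $f$), by $e^p$ and $f^p$, and by $ef$ (immediate from $[h,ef]=[e,ef]=[f,ef]=0$), subject to the single relation $(ef)^p = e^p f^p$. Thus $\mf{Z}(\Gr H_c)$ is the hypersurface $k[h^p,e^p,f^p,ef]/((ef)^p - e^p f^p)$.

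Next I would construct central lifts in $H_c$. The previous proposition already provides $h^p - h, e^p, f^p \in \mf{Z}(H_c)$. To lift $ef$, I would solve $g(h+1) - g(h) = -c(h)$ in $k[h]$: since the finite difference operator maps the span of $h, h^2, \dots, h^{p-1}$ isomorphically onto the span of $1, h, \dots, h^{p-2}$, and $\deg c \leq p-2$, such a $g$ exists. A direct check using $eh = (h-1)e$ and $fh = (h+1)f$ shows that $T := ef + g(h)$ commutes with $h$, $e$, and $f$. The symbol of $T$ is $ef$, so $\Gr \mf{Z}(H_c) \supseteq \mf{Z}(\Gr H_c)$; the opposite inclusion is automatic, giving the first assertion.

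For the PI-degree: by PBW, $H_c$ is free of rank $p^3$ over $Z_0 := k[h^p - h, e^p, f^p]$, and the first assertion gives $\mf{Z}(H_c) = Z_0[T]$, free of rank $p$ over $Z_0$ with basis $1, T, \dots, T^{p-1}$ (the degree-$p$ relation expressing $T^p$ modulo $Z_0$ is the lift of $(ef)^p = e^p f^p$). Hence $H_c$ is projective of rank $p^2$ over $\mf{Z}(H_c)$, so its PI-degree is at most $p$. Equality is witnessed by an explicit $p$-dimensional simple module constructed on the locus $\{e^p \neq 0\}$ by inverting $e$ and diagonalizing the action of $h$.

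Finally, for the coincidence of the smooth and Azumaya loci: the inclusion Azumaya $\subseteq$ smooth is standard for algebras finite projective over their centers. For the reverse, the non-Azumaya locus is the zero set of the discriminant of $H_c$ over $\mf{Z}(H_c)$, and the task is to identify this ideal with the Jacobian ideal of the defining polynomial of $\mf{Z}(H_c) \subset \mathbb{A}^4$. This discriminant computation is the main obstacle; alternatively, once the hypersurface description of $\mf{Z}(H_c)$ is in hand, the equality follows directly from \cite{BC}, whose hypotheses Smith's algebra clearly satisfies.
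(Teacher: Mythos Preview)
Your overall strategy for the first two assertions matches the paper's: construct the Casimir-type lift $T=ef+g(h)$ of $ef$ (the paper cites Smith for this element, calling it $\Delta_c$), observe that together with the $p$-center it exhausts $\mf{Z}(\Gr H_c)$, and read off the PI-degree from the rank count. However, there is a genuine slip in your first paragraph: $h^p$ is \emph{not} central in $\Gr H_c$. Since $[h,e]=e$, the operator $\ad(h)$ acts on $e$ with eigenvalue $1$, so $\ad(h)^p(e)=1^p\cdot e=e$, not $p\cdot e$. The correct generator is $h^p-h$ (note that $h$ sits in filtration degree $0$, so $h^p-h$ is its own symbol). You already use $h^p-h$ in the next paragraph, so this is an easy fix, but as written the identification of $\mf{Z}(\Gr H_c)$ is wrong.

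For the equality of the smooth and Azumaya loci, the paper takes a different and more self-contained route than either of your alternatives. Rather than computing a discriminant or invoking \cite{BC}, it argues directly that over any central character $\chi$ with $\chi(e^p)\neq 0$ or $\chi(f^p)\neq 0$ every simple module has dimension $p$: choose an $h$-eigenvector $v$ and note that $v,fv,\dots,f^{p-1}v$ (resp.\ the $e$-orbit) are nonzero with distinct $h$-weights. Hence the non-Azumaya locus is contained in $\{e^p=f^p=0\}$, which has codimension $\geq 2$ in $\spec \mf{Z}(H_c)$, and the Brown--Goodearl theorem \cite{BG} then forces the Azumaya and smooth loci to coincide. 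This bypasses the discriminant calculation you flag as the ``main obstacle'' and is worth knowing as a general technique.
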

\begin{proof}
We have the analog of the Casimir $\Delta_c=ef+z(h)$ for some $z(h)\in k[h],$ and
it is a central element in $H_c$ \cite{S}. It is easy
to check that the elements $e^p, f^p, h^p-h, \Delta_c$ are central. It is clear
that the center of $\Gr H_c=H_0$ is generated by $e^p, f^p, h^p-h, ef.$ Thus we see
that $\Gr \mf{Z}(H_c)=\mf{Z}\Gr (H_c).$ It is also clear that the degree of $\Delta_c$
over $Z_0=k[f^p, e^p, h^p-h]$ is equal to $p$; thus the PI-degree of $H_c$
is $p.$ 

Now we claim that if for a given character of the center
$\chi :\mf{Z}(H_c)\to k$ we have $\chi(f^p)\neq 0,$ or $\chi(e^p)\neq 0,$
 then any irreducible module $M$ corresponding
to ${\chi}$ has dimension $\geq p.$ Indeed, let us assume that $\chi(f^p)\neq 0.$ Let us
pick an arbitrary nonzero weight
vector $v\in V$ with respect to $h;$ thus $hv=\lambda v$ for some $\lambda\in k.$ 
Then the elements $v, fv, ..., f^{p-1}v$ are nonzero and have different weights
with respect to $h;$ thus they are linearly independent and $\dim V\geq p.$ 
But since $p$
is equal to the largest dimension of a simple $H_c$-module, $\dim V=p.$ Thus characters of the 
above type
lie in the Azumaya locus. This implies that the codimension of the complement
of the Azumaya locus has codimension $\geq 2.$ By a result of Brown-Goodearl \cite{BG}, 
this implies
that the smooth and Azumaya loci coincide.
 
\end{proof}

\vspace{5mm}


\begin{thebibliography}{EGG}
%\bibitem[BG] {BG}
%K.~Brown and I.~Gordon, {\em  Poisson orders, Symplectic reflection
%algebras and representation theory}, J.~Reine angew.~Math.~{\bf 559}
%(2003), 193--216.
\bibitem[BC]{BC}
K.~Brown and K.~Changtong, {\em Symplectic reflection algebras in positive characteristic},
 arxiv:0709.2338 (2007).

\bibitem[BG] {BG}
K.~Brown and I.~Goodearl, {\em  Homological aspects of Noetherian PI Hopf algebras and irreducible modules of maximal dimension}, Journal of Algebra  \textbf{198} (1997), 240--265.
\bibitem[EG]{EG}
P.~Etingof and V.~Ginzburg, {\em Symplectic reflection algebras and
Calogero-Moser spaces and deformed Harish-Chandra homomorphism},
Inventiones Math.~\textbf{147} (2002), no.~2, 243--348.

\bibitem[EGG]{EGG}
P.~Etingof, W.L.~Gan, and V.~Ginzburg, {\em Continuous Hecke algebras},
Transform.~Groups {\bf 10} (2005), no.~3-4, 423--447.

%\bibitem[GK]{GK}
%W.L.~Gan and A.~Khare, \emph{Quantized symplectic oscillator algebras of
%rank one}, Journal of Algebra \textbf{310} no.~2 (2007), 671--707.

\bibitem[G]{G}
V.~Ginzburg, {\em On primitive ideals}, Selecta Math.~\textbf{9} (2003),
379--407.


%\bibitem[Hu]{Hu}
%J.~Humphreys, {\em Introduction to Lie Algebras and Representation
%Theory},\hfill\break Springer-Verlag, New York-Berlin, 1978.

%\bibitem[K]{K}
%A.~Khare, {\em Category $\bggo$ over a deformation of the symplectic
%oscillator algebra}, Journal of Pure and Applied Algebra, Volume {\bf
%195}, no.~2 (2005), 131--166.
%\bibitem[KT]{KT}
%A.~Khare, A.~Tikaradze {\em Center and representations of infinitesimal Hecke algebras of $\mf{sl}_2$}, e-print
%(2008)(to appear in Comm. in Algebra)
%\bibitem[Kh2]{Kh2}
%A.~Khare, {\em Axiomatic framework for the BGG category $\bggo$}, e-print
%(2005),\hfill\break {\tt math.RT/0502227}.

%\bibitem[Q]{Q}
%D.~Quillen, {\em On the endomorphism ring of a simple module over an
%enveloping algebra}, Proc.~Amer.~Math.~Soc.~\textbf{21} (1969), 171--172.
%\bibitem [P]{P}
%D.~Panyushev, {\em On the coadjoint representation of $\Bbb Z\sb 2$-contractions of reductive Lie algebras.} Adv. Math.  213  (2007),  no. 1, 380--404.

\bibitem[P]{P}
D~Panyushev, {\em On the coadjoint representation of $\Bbb Z\sb 2$-contractions of reductive Lie algebras},
Adv. Math. \textbf{213} (2007), no. 1, 380–404.

\bibitem[KT]{KT}
A.~Khare and A.~Tikaradze, {\em Center and representations of infinitesimal Hecke algebras of $\mf{sl}_2$}, 
arxiv:0807.4776 (2008).


\bibitem [R1]{R1}
M.~Rais, {\em La repr\'esentation coadjointe du groupe affine}, Ann. Inst. Fourier (Grenoble)  \textbf{28}  (1978), no. 1, xi, 207--237
%\bibitem [R2]{R}
%M.~Rais, {\em L'indice des produits semi-directs $E\underset \rho\times\mf{g}$.} C. R. Acad. Sci. Paris Sér. A-B  287  (1978), no. 4, A195--A197.
%\bibitem [S]{S}
%5S.~Saad, {\em Representation coadjointe et ideaux primitifs pour une classe d'algèbres de Lie}, These, Université %de Poitiers, (1978)
\bibitem [R2]{R2}
M.~Rais, {\em Les invariants polyn\^omes de la repr\'esentation coadjointe de groupes inhomog\'enes},  arxiv:0903.5146 (2009).
\bibitem [S]{S}
S.~Smith, {\em A class of algebras similar to the enveloping algebra of sl(2),}
Trans. Amer. Math. Soc. \textbf{322} (1990), no. 1, 285--314.

\bibitem [T]{T}
A.~Tikaradze, {\em Infinitesimal Cherednik algebras of $\mathfrak{gl}_2$}, arxiv:0810.2001 (2008).



%\bibitem[T1]{T1}
%A.~Tikaradze, {\em Center of infinitesimal Hecke algebra of $\mf{sl}_2$},
%e-print (2006).
%\bibitem[T2]{T2}
%A.~Tikaradze, {\em Infinitesimal Hecke algebras of $\mf{sl}_2$ in positive characteristic}, e-print (2007)
 %(to appear in Journal of Algebra).

\end{thebibliography}
\end{document}